\newtheorem{theorem}{Theorem}[section]
\newtheorem{lemma}[theorem]{Lemma}
\newtheorem{remark}[theorem]{Remark}
\newtheorem{definition}[theorem]{Definition}
\def\mcE{\mathcal{E}}
\def\mcF{\mathcal{F}}
\def\msG{\mathscr{G}}
\numberwithin{equation}{section}
\begin{document}
\title[A Sierpinski carpet like fractal without standard self-similar energy]{A Sierpinski carpet like fractal without standard self-similar energy}

\author{Shiping Cao}
\address{Department of Mathematics, Cornell University, Ithaca 14853, USA}
\email{sc2873@cornell.edu}
\thanks{}

\author{Hua Qiu}
\address{Department of Mathematics, Nanjing University, Nanjing, 210093, P. R. China.}
\thanks{The research of Qiu was supported by the National Natural Science Foundation of China, grant 12071213, and the Natural Science Foundation of Jiangsu Province in China, grant BK20211142.}
\email{huaqiu@nju.edu.cn}

\subjclass[2010]{Primary 28A80, 31E05.}

\date{}

\keywords{Sierpinski carpet like fractals, Dirichlet forms, sub-Gaussian heat kernel estimate, resistance metric}

\maketitle

\begin{abstract}
We construct a Sierpinski carpet like fractal, on which a self-similar diffusion with sub-Gaussian heat kernel estimate does not exist, in contrast to previous researches on the existence of such diffusions, on the generalized Sierpinski carpets and recently introduced unconstrained Sierpinski carpets.  
\end{abstract}

\section{Introduction}\label{sec1}
In history, the existence of locally symmetric diffusions with sub-Gaussian heat kernel estimates on Sierpinski carpets (SC) was proved in the pioneering works \cite{BB,BB1,BB2} by Barlow and Bass, using a probability method. One key step in the proof is to show certain paths, named `Knight moves' and `corner moves', occur with positive probability, and the argument relies on the local symmetry of the fractals. By introducing the difficult coupling argument, the result was later extended to generalized Sierpinski carpets (GSC) \cite{BB3}, which are higher dimensional analogues of SC.

A different approach, later shown in \cite {BBKT} to be equivalent with Barlow and Bass's \cite{BB,BB3}, was introduced by Kusuoka and Zhou \cite{KZ}. The strategy is to construct  self-similar Dirichlet forms on fractals as limits of averaged rescaled energies on cell graphs. This approach is analytic, except a key step to verify that the resistance constants and the Poincare constants are comparable. In particular, the probabilistic `Knight move' argument was borrowed to achieve this, see Theorem 7.16 in \cite{KZ}.  This gap was fulfilled in the recent work \cite{CQ} of the authors by a chaining argument of resistances, and the existence theorem extends to a class of carpet-like fractals, named unconstrained Sierpinski carpets (USC).  In some sense, the USC are more flexible in geometry as  cells except those along the boundary are allowed to live off the grids. See Figure \ref{fig1} for examples of SC and USC. An analytic approach on GSC when the Hausdorff dimension is greater than $2$ still remains unknown.

\begin{figure}[htp]
	\includegraphics[width=4.9cm]{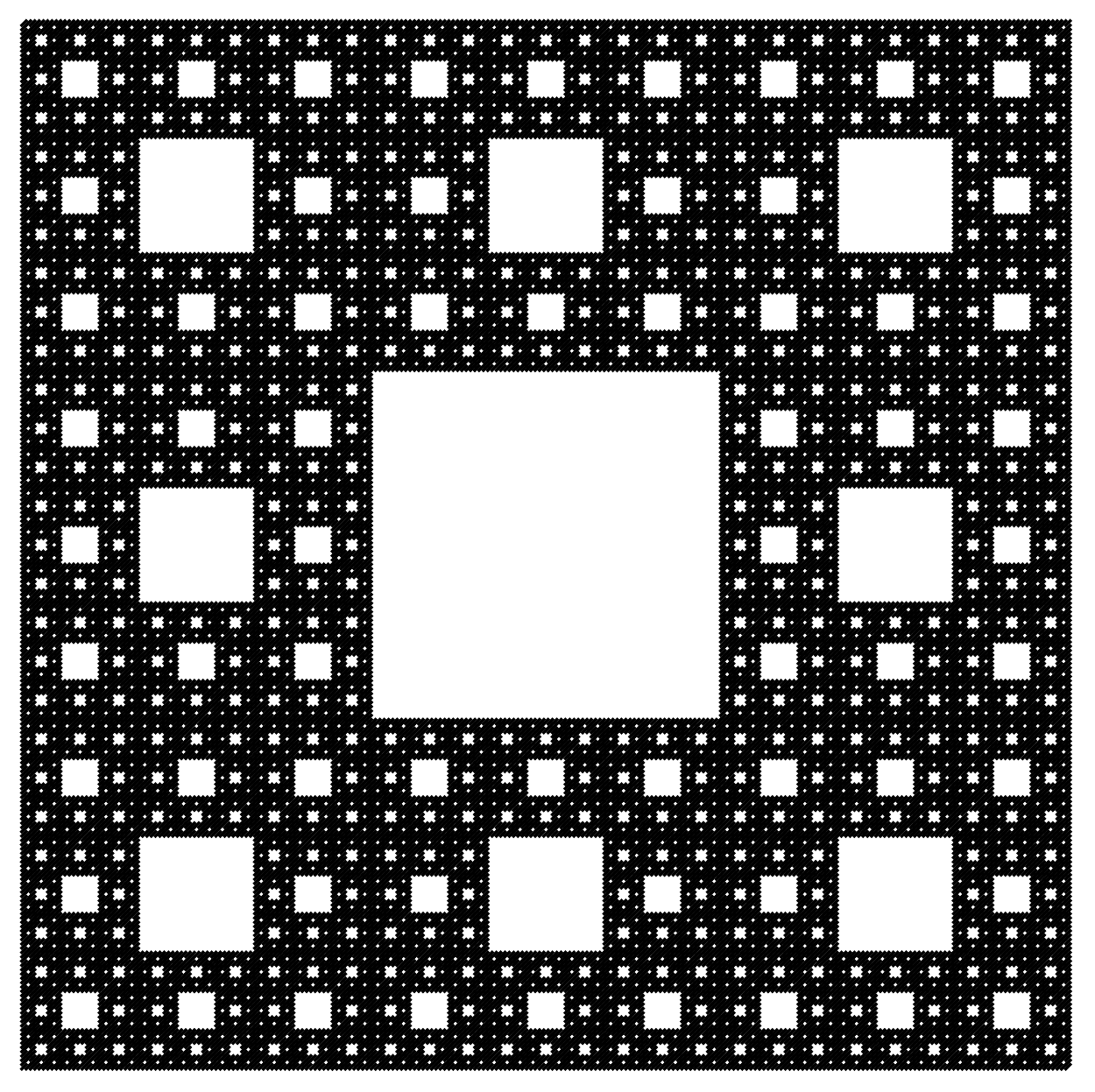}\hspace{1cm}
	\includegraphics[width=4.9cm]{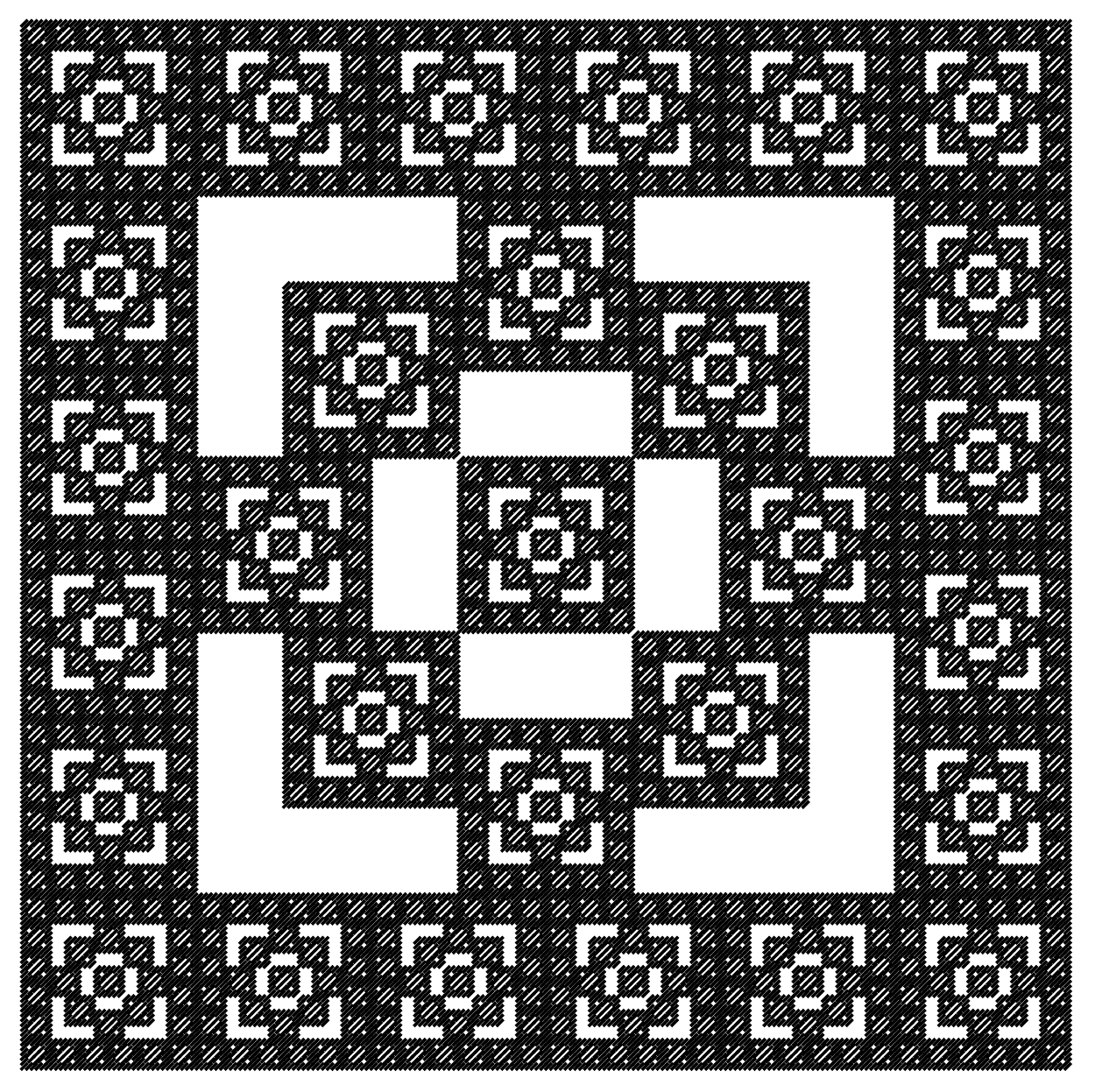}
	\caption{Two examples of USC (the left one is the standard SC).}
	\label{fig1}
\end{figure}

It is believed by the authors that the same approach may be adapted to some planar fractals with relaxed boundary restriction, or even with  contraction ratios of $1$-cells allowed to be distinct, but still keeping the high symmetry. This naturally leads us to look at a larger family, named as  Sierpinski carpet like fractals (LSC), see  Definition \ref{def21}. Unexpectedly, in this paper, we will present an example of LSC, which does not have a `nice' self-similar Dirichlet form. Precisely, there does not exist a self-similar diffusion with sub-Gaussian heat kernel estimate on this fractal. In addition, this example can also be easily generalized to the higher dimensional case.

Before proceeding, we briefly explain the analytic approach to the existence  \cite{KZ,CQ}. This will help readers to see the intuition that motivate the construction (see Remark \ref{rem35}). In \cite{KZ}, for a large class of fractals, four sequences of constants were introduced, and the two most important ones are the \textit{Poincare constants} $\lambda_n$ and the \textit{resistance constants} $R_n$ (we adapt $\lambda_n$ here by a scaling to help readers to see the relation): 
\[\lambda_n=\sup\limits _{f\in l^2(W_n)}\frac{[(f-[f]_{W_n})^2]_{W_n}}{E_n(f)}, \]  
where $W_n$ denotes the set of $n$-cells of the fractal, $[f]_{W_n}$ denotes the average of $f$ on $W_n$, and $E_n$ is the natural cell graph energy on $W_n$;
\[R_n=\inf\limits_{w\in W_m,m\geq 1}R_{m+n}(w\cdot W_n,C_w\cdot W_n),\]
where $R_{m+n}(X,Y)$ is the effective resistance associated with the energy form $E_{m+n}$ between two subsets $X,Y$ in $W_{m+n}$, $w\cdot W_n$ is the set of $(m+n)$-cells contained in $w$, $C_w$ consists of those $m$-cells that are not neighboring to $w$, and $C_w\cdot W_n$ is the set of $(m+n)$-cells contained in $C_w$. It is natural to see $\lambda_n\gtrsim  R_n$, and moreover for the recurrent case in \cite{KZ} (also see \cite{CQ} for the reorganized version where the constants $\lambda^D_n$ and some related estimates are dropped), it is proved that
\[
R_n\lambda_m\lesssim \lambda_{n+m}\lesssim \lambda_n\lambda_m.
\]
To achieve an exponential growth order of $R_n$, we need a remaining piece of estimate $\lambda_n\lesssim R_n$. This was not proved analytically until recently in \cite{CQ}. The proof is based on the fact that $\lambda_n\geq \rho^m\lambda_{n-m}$ for some $\rho>1$ (for the lower dimensional case), so $\lambda_n\gg \lambda_{n-m'}$ if we fix some $m'$ large enough. In particular, if we choose $f$ such that $E_n(f)=1$ and $[f^2]_{W_n}-[f]_{W_n}^2=\lambda_n$, then the variation of $f$ on each $(n-m')$-cell (and also neighboring $n$-cells) is negligible with respect to the total variation of $f$. Using this convenience, on a SC, one can pick two $(n-m')$-cells with difference of $f$ comparable with $\lambda_{n-m'}$, and thus the resistance between opposite boundary lines can be estimated from below in terms of $\lambda_n$. On USC, based on the same idea, a more complicated argument can be applied to find a chain of $(n-2m')$-cells, lying along the boundary of some $(n-m')$-cell. In addition, a linearization extension argument is provided to fulfill  the desired resistance estimate. See Section 4 in \cite{CQ} for details. 

The contents of this paper are as follows. In Section \ref{sec2}, we present the exact definition of LSC and list some necessary notations. In Section \ref{sec3}, we construct the counter-example of LSC as desired.

\section{Sierpinski carpet like fractals}\label{sec2}
We will consider carpet-like fractals, which are generated with a similar iterated function system (IFS for short) as SC, living symmetrically in a square, with more flexible locations of the cells (copies of the fractal under the composition of mappings in the IFS) as USC, but allowing the contraction ratios of the IFS to be distinct.

Let $\square$ be a unit square in $\mathbb{R}^2$. We let
\[q_1=(0,0),\quad q_2=(1,0),\quad q_3=(1,1),\quad q_4=(0,1)\]
be the four vertices of $\square$, where we write $x=(x_1,x_2)$ for a point in $\mathbb{R}^2$. In addition, we will write
\[\overline{x,y}=\big\{(1-t)x+ty:0\leq t\leq 1\big\},\]
for the line segment connecting points $x,y$ in $\mathbb{R}^2$.

For convenience, we denote the group of self-isometries on $\square$ by
\[\msG=\big\{\Gamma_v,\Gamma_h,\Gamma_{d_1},\Gamma_{d_2},id,\Gamma_{r_1},\Gamma_{r_2},\Gamma_{r_3}\big\},\]
where $\Gamma_v,\Gamma_h,\Gamma_{d_1},\Gamma_{d_2}$ are \textit{reflections} ($v$ for vertical, $h$ for horizontal, $d_1,d_2$ for two diagonals),
\begin{equation}\label{eqn21}
\begin{aligned}
	&\Gamma_v(x_1,x_2)=(x_1,1-x_2),\qquad \Gamma_h(x_1,x_2)=(1-x_1,x_2),\\
	&\Gamma_{d_1}(x_1,x_2)=(x_2,x_1),\qquad\Gamma_{d_2}(x_1,x_2)=(1-x_2,1-x_1),
\end{aligned}
\end{equation}
 for $x=(x_1,x_2)\in \square$; $id$ is the identity mapping; and $\Gamma_{r_1},\Gamma_{r_2},\Gamma_{r_3}$ are \textit{rotations},
\begin{equation}\label{eqn22}
	\Gamma_{r_1}(x_1,x_2)=(1-x_2,x_1),\quad \Gamma_{r_2}=(\Gamma_{r_1})^2,\quad \Gamma_{r_3}=(\Gamma_{r_1})^3,
\end{equation}
around the center of $\square$ counter-clockwisely with angle $\frac{j\pi}{2}$, $j=1,2,3$.

We will always require all the structures (eg: the Dirichlet forms, the diffusions, the measures, etc.) under consideration are $\msG$-symmetric without explicit mention.

\begin{definition}[Sierpinski carpet like fractals (LSC)]\label{def21}\quad
	
Let $k\geq 3$, $N\geq 4(k-1)$, and $\{\rho_i\}_{i=1}^N$ be a collection of positive numbers with $\sum_{i=1}^N\rho_i^2<1$. Let $\{F_i\}_{1\leq i\leq N}$ be a collection of similarities with the form $F_ix=\rho_ix+c_i$ for some $c_i\in \mathbb{R}^2$. Assume the following holds:

\noindent\emph{(Non-overlapping).} $F_i(\square)\cap F_{j}(\square)$ is either a line segment, or a point, or empty, $i\neq j$;

\noindent\emph{(Connectivity). } $\bigcup_{i=1}^N F_i(\square)$ is connected;

\noindent\emph{(Symmetry).} $\Gamma\big(\bigcup_{i=1}^N F_i(\square)\big)=\bigcup_{i=1}^N F_i(\square)$ for any $\Gamma\in \msG$;

\noindent\emph{(Boundary included).} $\overline{q_1,q_2}\subset\bigcup_{i=1}^N F_i(\square)\subset\square$.

We call the unique compact subset $K\subset \square$ satisfying
\[K=\bigcup_{i=1}^N F_iK\]
a {\em Sierpinski carpet like  fractal (LSC)}. 
\end{definition}

Throughout the paper, we always let $d$ be the Euclidean metric on $K$ induced from $\mathbb{R}^2$. Note that by the non-overlapping condition, the invariant set $K$ satisfies the open set condition, i.e. $\bigcup_{i=1}^N F_i(\square^\circ)\subset \square^\circ$ and $F_i(\square^\circ)\cap F_j(\square^\circ)=\emptyset$ for each pair $i\neq j$, where $\square^\circ$ denotes the interior of $\square$. It follows that the Hausdorff dimension of $K$, denoted as $d_H$, is the unique solution of $\sum_{i=1}^N\rho_i^{\alpha}=1$. In addition, the $d_H$-dimensional Hausdorff measure of $K$ is positive and finite. We will always let $\mu$ be the normalized $d_H$-dimensional Hausdorff measure on $K$. That is, $\mu(F_i K)=\rho_i^{d_H}$ for each $i$, and $\mu(F_w K)=\rho_{w_1}^{d_H}\cdots \rho_{w_m}^{d_H}$ for each $w=w_1\cdots w_m\in W_m:=\{1,\cdots,N\}^m$ with $m\geq 0$. 

The condition $k\geq 3$ is to avoid trivial set by the symmetry condition. The condition $N\geq 4(k-1)$ is a requirement of the boundary included condition. 
 The condition $\sum_{i=1}^N\rho_i^2<1$ ensures that $d_H<2$, so that we are dealing with a non-trivial planar self-similar set. 

Note that when $k=3$, $N=8$ and all $\rho_i=\frac 13$, $K$ is the standard SC. Comparing LSC with USC introduced in \cite{CQ}, the main difference is that, the contraction ratios of the IFS are kept to be the same in the later. 
\vspace{0.2cm}

Our naive question is to ask whether there always exist natural `nice' diffusion processes on the more flexible LSC. It was proved in \cite{BB2,BB3,CQ} that the self-similar diffusions on SC, GSC and USC always enjoy a sub-Gaussian heat kernel estimate,
\begin{equation}\label{eqn23}
  \begin{aligned}
\frac{c_1}{t^{d_H/\beta}}\exp({-c_2(\frac{d(x,y)^\beta}{t})^{\frac{1}{\beta-1}}})&\leq p(t,x,y)\\ &\leq \frac{c_3}{t^{d_H/\beta}}\exp({-c_4(\frac{d(x,y)^\beta}{t})^{\frac{1}{\beta-1}}}), \quad \forall 0<t\leq 1,\forall  x,y\in K,
\end{aligned}
\end{equation}
where $p(t,x,y)$ is the \textit{heat kernel} (also called \textit{transition density}), $\beta=-\frac{\log \eta }{\log k}+d_H$ is the \textit{walk dimension}, $0<\eta<1$ is the common renormalization factor of the self-similar diffusion, $c_1-c_4$ are positive constants. To be precise, we need to find certain self-similar Dirichlet forms on LSC.

\begin{definition}\label{def22}
Let $(\mcE,\mcF)$ be a local regular conservative irreducible self-similar Dirichlet form  on an LSC, denoted as $K$. We call $(\mcE,\mcF)$ a standard self-similar Dirichlet form if there exists $\theta\in \mathbb{R}$ such that  
\begin{equation}\label{eqn24}
	\mcE(f)=\sum_{i=1}^N \rho_i^{-\theta}\mcE(f\circ F_i),\qquad\forall f\in \mcF,
\end{equation}
and 
\begin{equation}\label{eqn25}
	f\circ F_i \in \mcF\cap C(K),\forall 1\leq i\leq N\Rightarrow f\in \mcF\cap C(K).
\end{equation}
We abbreviate such Dirichlet forms to SsDF in the later context.
\end{definition}

One may choose other renormalization factors to replace (\ref{eqn24}) to define other self-similar Dirichlet forms. However, if the heat kernel estimate (\ref{eqn23}) is satisfied, then the renormalization factor can only be of the form $\rho_i^{-\theta}$. In fact, for the low dimensional setting $d_H<\beta$, it is well known that (\ref{eqn23}) implies that 
\[c_5 d(x,y)^\theta\leq R(x,y)\leq c_{6} d(x,y)^\theta, \quad \forall x,y \in K,\] 
for $\theta=\beta-d_H$ and some positive constants $c_5, c_6$. See \cite{BCK} for a proof on the discrete setting.

\section{The counter-example}\label{sec3}
In the following, we construct an LSC fractal $K$, associated with an IFS $\{F_i\}$ consisting of $104$ contraction similarities. Let $a=\sqrt{\frac{7}{24}}-\frac{1}{2}\approx 0.0400617$, which is the positive solution of the equation 
\[6(a^2+a)=\frac{1}{4}.\]
First, we define $F_i, 1\leq i\leq 13$ (see Figure \ref{fig2} for an illustration of these mappings),

\[\begin{cases}
	F_{2j+1}(x)=ax+\frac{j}{24},  &\text{ for }0\leq j\leq 5,\\
	F_{2j+2}(x)=a^2x+a+\frac{j}{24},&\text{ for }0\leq j\leq 5,\\
	F_{13}(x)=\frac{1}{4}x+\frac{1}{4}. 
\end{cases}
\]
Next, we use symmetry to extend the above $F_i$ to $1\leq i\leq 100$: let 
\[F_{i}(x)=\Gamma_h\circ F_{27-i}\circ \Gamma_h(x),\quad 14\leq i\leq 26,\]
where $\Gamma_h$ is the horizontal reflection in (\ref{eqn21}); let 
\[F_{i+25j}(x)=\Gamma_{r_j}\circ F_{i}\circ \Gamma_{r_{4-j}}(x),\qquad 1\leq i\leq 25,1\leq j\leq 3,\]
where $\Gamma_{r_j}$'s are the rotations in (\ref{eqn22}). Finally, we define the last $4$ mappings,
\[\begin{aligned}
F_{101}(x)=\frac{1}{4}x+(\frac{1}{4},\frac{1}{4}),\quad F_{102}(x)=\frac{1}{4}x+(\frac{1}{2},\frac{1}{4}),\\
F_{103}(x)=\frac{1}{4}x+(\frac{1}{2},\frac{1}{2}),\quad F_{104}(x)=\frac{1}{4}x+(\frac{1}{4},\frac{1}{2}).
\end{aligned}\]
Let $K$ be the unique compact subset of $\mathbb{R}^2$ satisfying
\[K=\bigcup_{i=1}^{104} F_iK.\]
See Figure \ref{fig3} for an illustration of the IFS and a sketch of $K$. 

\begin{figure}[htp]
	\includegraphics[width=8cm]{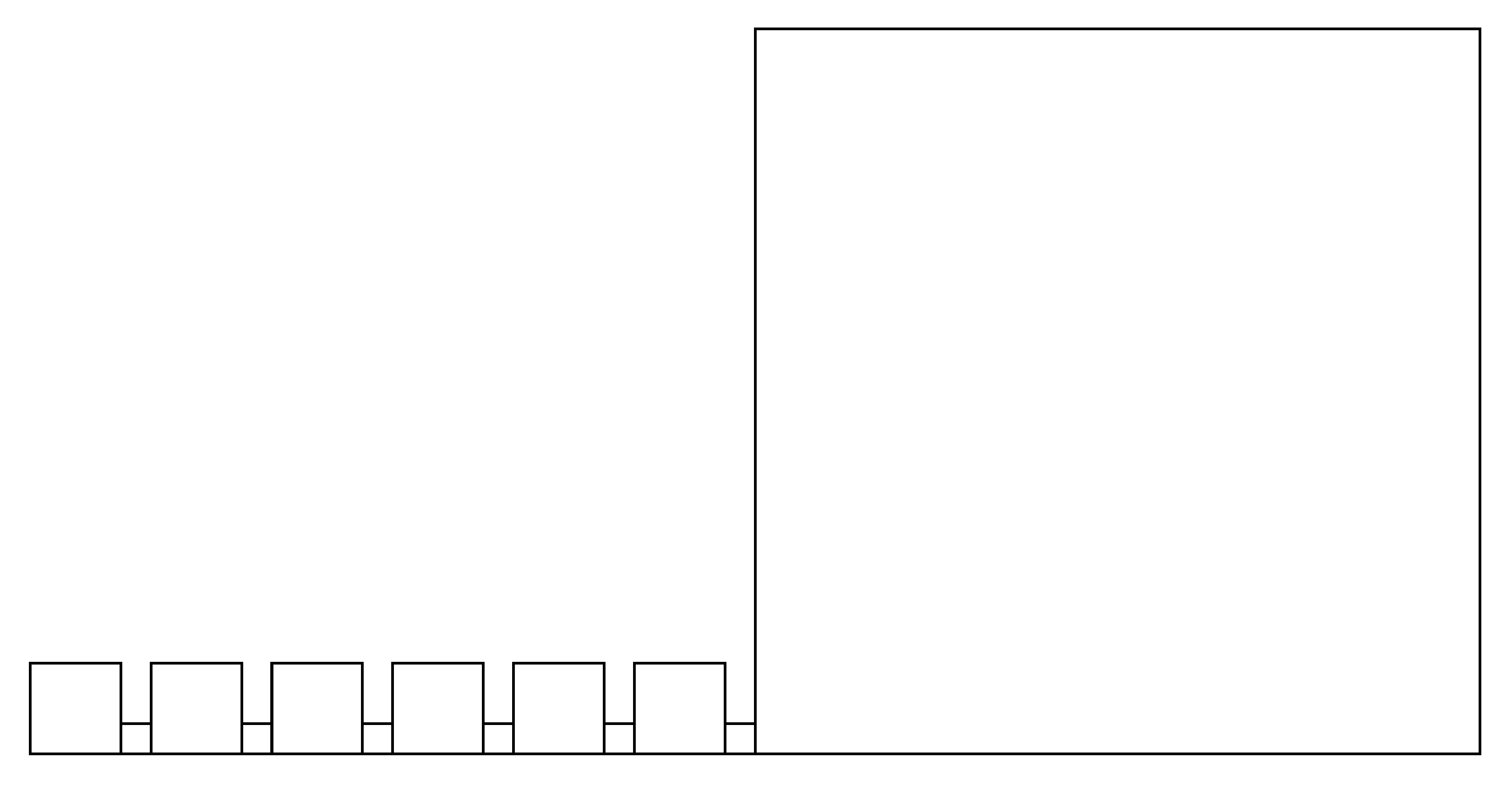}
	\caption{A sketch of mappings $F_i$, $1\leq i\leq 13$ (not using the true value of $a$, which will make $a^2$ to be too small to be distinguished).
}
	\label{fig2}
\end{figure}

\begin{figure}[htp]
	\includegraphics[width=4.9cm]{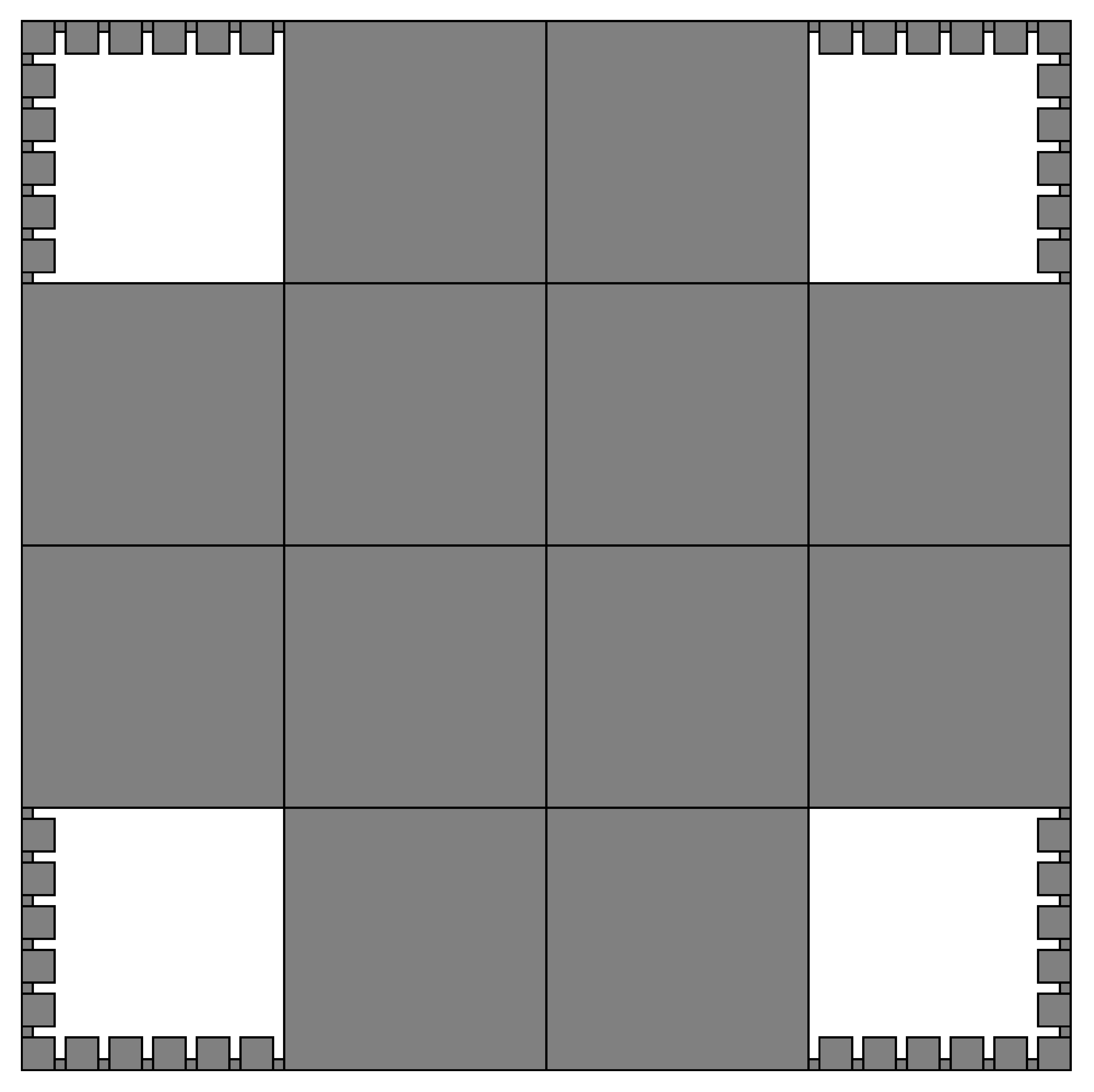}\hspace{1cm}
	\includegraphics[width=4.9cm]{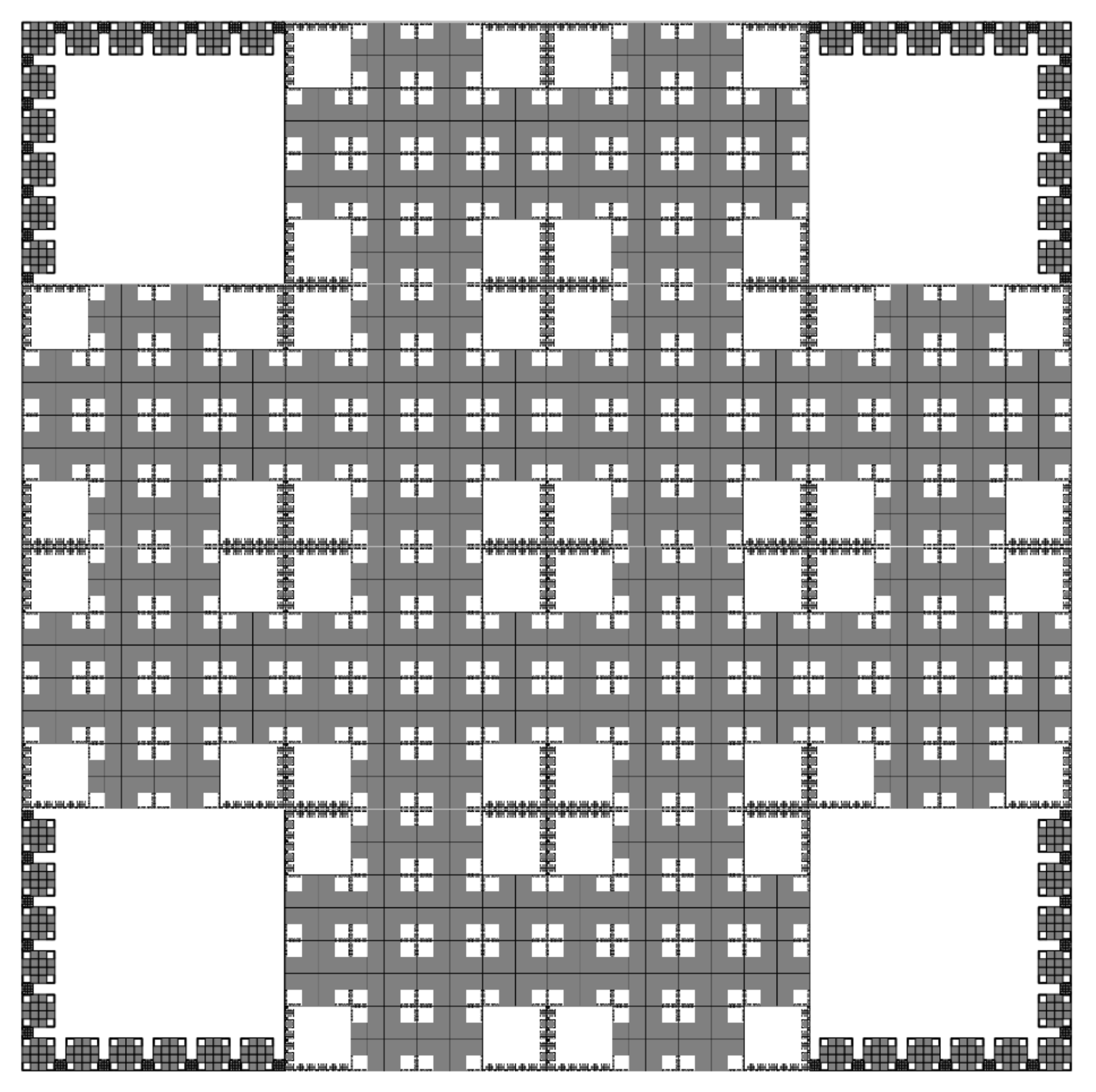}
	\caption{The IFS and a sketch of $K$.}
	\label{fig3}
\end{figure}

In the rest of this section, we will show the following result.

\begin{theorem}\label{thm31}
There is no SsDF on $K$. 
\end{theorem}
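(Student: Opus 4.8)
The plan is to argue by contradiction. Suppose $(\mcE,\mcF)$ is an SsDF on $K$ with parameter $\theta$ as in (\ref{eqn24}). Since $\sum_i\rho_i^2<1$ we are in the low-dimensional (recurrent) regime, so the SsDF carries an associated resistance metric $R$ on $K$ with the self-similar scaling
$R(F_wx,F_wy)=\rho_w^{\theta}R(x,y)$ for every word $w=w_1\cdots w_m$ (with $\rho_w=\rho_{w_1}\cdots\rho_{w_m}$), and necessarily $\theta>0$; moreover, invoking the sub-Gaussian heat kernel estimate (\ref{eqn23}) that such a form would satisfy together with \cite{BCK}, one has $R(x,y)\asymp d(x,y)^{\theta}$. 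The contradiction will be produced along the single boundary segment $\overline{q_1,q_2}\subset K$, on which two incompatible self-similar cell structures coexist: the ``combs'' of $1$-cells of ratios $a$ and $a^2$ accumulating at the corners, and the blocks of $1$-cells of ratio $\tfrac14$. (Intuitively, this is the place where the chaining argument behind $\lambda_n\lesssim R_n$ in \cite{CQ} must break: a test function supported on a long comb sees much more $L^2$-oscillation than resistance.)

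First I would analyse the comb near $q_1$. The twelve $1$-cells $F_1,\dots,F_{12}$, of alternating ratios $a,a^2,a,a^2,\dots$, are glued left to right into a chain running from $q_1$ to $v_1:=(\tfrac14,0)$, where it abuts the $\tfrac14$-cell $F_{13}$; the crucial geometric point -- which uses $6(a^2+a)=\tfrac14$ and the boundary-included condition -- is that each of these cells is, apart from the (one or two) edges along which it is glued to its chain-neighbours, bordered only by $K^c$. The same holds for the symmetric comb running from $q_1$ up to $(0,\tfrac14)$, and the union $U_1$ of the two combs meets $K\setminus U_1$ in exactly the two points $v_1$ and $(0,\tfrac14)$. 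Hence every current between $U_1$ and the rest of $K$ passes through those two points, and effective resistances inside $U_1$ are governed by an explicit finite series/tree network whose edge resistances are $\rho_i^{\theta}$ times fixed resistance constants of $K$ (the port-to-port resistances of a copy of $K$). In particular $R(q_1,v_1)$ is a definite finite combination of $a^{\theta}$ and $a^{2\theta}$; iterating the comb inside its own initial cell $F_1$ (whose boundary again begins with a comb of the same combinatorial type, now at ratios $a^2,a^3,\dots$, with further $\tfrac14$-cells of $F_1$ interlaced) yields, for each $n$, boundary points $x_n\in\overline{q_1,q_2}$ together with exact formulas for $R(q_1,x_n)$, while $d(q_1,x_n)$ decays at a rate governed purely by the ratios $a$ and $\tfrac14$ appearing in the word.

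Next I would run the analogous computation inside the $\tfrac14$-cell $F_{13}$ -- a faithful scaled copy of $K$ lying on the same segment $\overline{q_1,q_2}$, with endpoints $v_1$ and $v_2:=(\tfrac12,0)$ and $R(v_1,v_2)\asymp(\tfrac14)^{\theta}$ -- and, more generally, along chains of $\tfrac14$-cells of $F_{13}$, to get a second family of boundary points for which $R$ and $d^{\theta}$ are tied at the ``honest'' rate. Feeding both families into the constraint $R(x,y)\asymp d(x,y)^{\theta}$ forces a single multiplicative relation between $a^{\theta}$, $a^{2\theta}$ and $(\tfrac14)^{\theta}$ that must hold at every level $n$ with $n$-independent constants; combined with the series formulas from the comb this yields an exact arithmetic relation among $\log a$ and $\log\tfrac14$ -- in particular $\log a/\log\tfrac14\in\mathbb{Q}$. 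But $a=\sqrt{7/24}-\tfrac12$ is an algebraic number of degree two (root of $x^2+x-\tfrac1{24}$) with $a/\bar a\notin\{\pm1\}$, so $a^q\notin\mathbb{Q}$ for every $q\ge1$ and no such relation can hold. This contradiction shows there is no SsDF on $K$; finally, the whole scheme -- a comb of two incommensurable ratios abutting a block of a third ratio along a boundary facet -- transplants verbatim to a cube in $\mathbb{R}^n$, giving the higher-dimensional statement.

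The step I expect to be the main obstacle is turning the informal picture of the comb as a ``dead-end-laden path'' into rigorous two-sided resistance estimates: the series upper bound for $R(q_1,v_1)$ is immediate from the network description, but the matching lower bound requires showing that the parallel routes (which wind through the bulk of $K$ and back to $v_1$) contribute only to the $O(1)$ constant, e.g. via an explicit cutset or a comparison of harmonic functions exploiting the two-point attachment of $U_1$. Equally delicate is organising the final step so that the contradiction is a statement about the exponential \emph{rates} of decay along the two iterated families, hence immune to the fractal-dependent constants hidden in $R(x,y)\asymp d(x,y)^{\theta}$ and in (\ref{eqn23}); this is precisely why one must play two genuinely self-similar infinite families against each other, keeping careful track of the mismatch between combinatorial level and metric scale, rather than comparing a single finite configuration.
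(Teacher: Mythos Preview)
Your plan has two genuine gaps, one at the starting hypothesis and one at the endgame.

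First, the assumption $R(x,y)\asymp d(x,y)^{\theta}$ is not available. Definition~\ref{def22} of an SsDF demands only that $(\mcE,\mcF)$ be local, regular, conservative, irreducible and satisfy (\ref{eqn24})--(\ref{eqn25}); the sub-Gaussian estimate (\ref{eqn23}) is \emph{not} part of the hypothesis. The paper uses (\ref{eqn23}) only to motivate the exponent $\rho_i^{-\theta}$, not as an assumption in the theorem. So you cannot simply invoke \cite{BCK}. Relatedly, you assert that the form is a resistance form (points have positive capacity) and that $\theta>0$, but neither is justified; the paper's argument works for any real $\theta$ and never needs point capacity, only the positivity of $Cap(L_i)$, which is proved in Lemma~\ref{lemma34}. (Also, $U_1$ meets $K\setminus U_1$ along two \emph{segments} of length $a^2$, not two points: $F_{12}K$ and $F_{13}K$ share $\{1/4\}\times[0,a^2]$.)

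Second, and more seriously, the arithmetic contradiction you aim for does not exist. The two-sided estimate $R\asymp d^{\theta}$ is scale-invariant in a way that is completely compatible with incommensurable contraction ratios: since $R(F_wx,F_wy)=\rho_w^{\theta}R(x,y)$ and $d(F_wx,F_wy)=\rho_w\,d(x,y)$ hold exactly for every word $w$, iterating either the $a$-comb or the $\tfrac14$-blocks reproduces the \emph{same} relation $R\asymp d^{\theta}$ with the same implicit constants. Playing two self-similar families against each other yields nothing beyond $c_5\le R/d^{\theta}\le c_6$, never an equality of exponential rates that would pin down $\log a/\log\tfrac14\in\mathbb{Q}$. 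The ``series formula'' $R(q_1,v_1)\approx C\cdot 6(a^{\theta}+a^{2\theta})$ is, at best, a single inequality with fractal-dependent constants; iterating it only rescales both sides by the same factor.

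For comparison, the paper's proof is direct and uses none of the above machinery. It derives two incompatible inequalities on $\theta$ by building explicit test functions: routing through the eight $\tfrac14$-cells in the middle strip forces $\theta\le\tfrac12$ (a resistance upper bound for $R(L_2,L_4)$), while the $a/a^2$ comb along the bottom lets one construct, from a near-optimal $f$ for $R(F_1K,F_{26}K)$, a competitor $g$ with $\mcE(g)=\tfrac15 a^{-\theta}\mcE(f)$, forcing $\theta\ge -\log 5/\log a>\tfrac12$. The comb is used not to compare $R$ with $d^{\theta}$, but to manufacture a self-improving inequality for a single effective resistance.
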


We will prove Theorem \ref{thm31} by contradiction. We assume that there is an SsDF on $K$ satisfying  (\ref{eqn24}) and (\ref{eqn25})  for some $\theta\in \mathbb{R}$. Then we will obtain an upper bound estimate $\theta\leq \frac{1}{2}$ and a lower bound estimate $\theta>\frac{1}{2}$, which is of course impossible. Theorem \ref{thm31} will then follow immediately.  

Before proving Theorem \ref{thm31}, we present a few more concepts about capacity. Readers can find details in \cite{CF,FOT}. 

\begin{definition}\label{def32}
	Let $X$ be a compact set, and $\mu$ be a Radon measure on $X$ with full support. Let $(\mcE,\mcF)$ be a regular conservative irreducible Dirichlet form on $L^2(X,\mu)$. We write \[\mcE_1(f)=\mcE(f)+\|f\|^2_{L^2(X,\mu)},\]
	for any $f\in \mcF$.
	
	(a). For each open set $U\subset X$, we define the capacity of $U$ by
	\[Cap(U)=\inf\big\{\mcE_1(f):f|_U\geq 1 \text{ a.e., }f\in \mcF\big\}.\]
 
    For a general Borel set $B\subset X$, define the capacity of $B$ by $Cap(B)=\inf_{U\supset B}Cap(U)$, where the infimum is taken over all open sets $U$ containing $B$. 
    
    (b). For $A\subset X$, let $l(A)$ denote the collection of all real functions on $A$. A function $f\in l(A)$ is called \textit{quasi-continuous} if  for any $\varepsilon>0$, there is open set $U\subset X$ such that $Cap(U)<\varepsilon$ and {$f|_{A\setminus U}\in C(A\setminus U)$}. 
    
    For each $f\in \mcF$, we write $\tilde{f}$ a quasi-continuous modification of $f$ (whose existence is guaranteed by Theorem 2.1.3 in \cite{FOT}). In addition, write  \[\tilde{\mcF}=\{\tilde{f}:f\in \mcF\}.\]
    We say $\tilde{f}=\tilde{g}$ if $\{x:f(x)\neq g(x)\}$ is of zero capacity.
    
    (c). Let $A,B$ be two disjoint closed subsets of $X$ with positive capacity, we denote 
    \[R_{\mcE}(A,B):=\sup\{\frac{1}{\mcE(\tilde{f})}:\tilde{f}\in \tilde{\mcF},\tilde{f}|_A=0,\tilde{f}|_{B}=1\}.\]
    Write $R(A,B)=R_{\mcE}(A,B)$ for short if no confusion caused.
\end{definition}

\begin{lemma}\label{lemma33}
	Let $X$, $\mu$ and $(\mcE,\mcF)$ be as defined in Definition \ref{def32}. Let $A,B$ be two disjoint closed subsets of $X$ with positive capacity. 
	Then, $0<R(A,B)<\infty$. In addition, for any $\varepsilon>0$, there is $f\in \mcF\cap C(X)$ such that $f|_A=0,f|_B=1$ and $\mcE(f)\leq \varepsilon+R^{-1}(A,B)$.
\end{lemma}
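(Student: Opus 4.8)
The plan is to prove the two assertions separately, and in both to first reduce to admissible functions valued in $[0,1]$. If $\tilde f\in\tilde{\mcF}$ satisfies $\tilde f|_A=0$ and $\tilde f|_B=1$, then the unit contraction $(\tilde f\vee 0)\wedge 1$ is again admissible, lies in $\tilde{\mcF}$, and has no larger energy by the Markovian property, so
\[
R(A,B)=\sup\Big\{\tfrac{1}{\mcE(\tilde f)}:\tilde f\in\tilde{\mcF},\ 0\le\tilde f\le 1,\ \tilde f|_A=0,\ \tilde f|_B=1\Big\}.
\]
As a common building block I first produce one continuous admissible function. Since $X$ is compact and $A,B$ are disjoint and closed, take a Urysohn function $u\in C(X)$ with $u|_A=0$, $u|_B=1$, $0\le u\le 1$; by regularity of $(\mcE,\mcF)$ approximate it in supremum norm by some $g\in\mcF\cap C(X)$ with $\|g-u\|_\infty<1/4$, and set $e:=\phi\circ g$ with $\phi(t)=((2t-\tfrac12)\vee 0)\wedge 1$. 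Then $e\in\mcF\cap C(X)$, $0\le e\le 1$, $e|_A=0$, $e|_B=1$, and $\mcE(e)<\infty$ since $\phi$ is Lipschitz with $\phi(0)=0$; being continuous, $e$ is its own quasi-continuous version and hence admissible. Moreover $\mcE(e)>0$, since otherwise irreducibility would force $e$ constant, contradicting $e|_A=0\neq 1=e|_B$ (both sets being nonempty as they have positive capacity). Thus $R(A,B)\ge 1/\mcE(e)>0$.

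The finiteness $R(A,B)<\infty$ is the first substantive point. Suppose not; then there are admissible $\tilde f_n$ with $0\le\tilde f_n\le 1$ and $\mcE(\tilde f_n)\to 0$. Since $X$ is compact, $\|f_n\|_{L^2}^2\le\mu(X)<\infty$, so $\{f_n\}$ is bounded in the Hilbert space $(\mcF,\mcE_1)$. A bounded sequence has a weakly convergent subsequence $f_{n_k}\rightharpoonup f$, and by Mazur's lemma finite convex combinations $g_m$ of the tail $\{f_{n_k}\}_{k\ge K}$ converge to $f$ in $\mcE_1$; each $g_m$ is still admissible, and by convexity of $\mcE$ together with $\mcE(f_{n_k})\to 0$ these combinations have energy tending to $0$, whence $\mcE(f)=0$ and $f\equiv c$ is constant by irreducibility. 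Passing to a further subsequence, $\mcE_1$-convergence yields $\tilde g_m\to c$ quasi-everywhere (\cite{FOT}, Thm. 2.1.4). Since each $\tilde g_m$ equals $0$ on $A$ and $1$ on $B$, while $A$ and $B$ have positive capacity and are therefore not quasi-null, evaluating the q.e.\ limit on $A$ forces $c=0$ and on $B$ forces $c=1$, a contradiction. Hence $\inf\mcE(\tilde f)>0$ over admissible $\tilde f$, i.e.\ $R(A,B)<\infty$, and together with the previous paragraph $0<R(A,B)<\infty$.

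For the approximation statement I would reduce matters to a density theorem for regular Dirichlet forms. Fix $\varepsilon>0$ and choose an admissible $v\in\tilde{\mcF}$ with $0\le v\le 1$ and $\mcE(v)\le R^{-1}(A,B)+\varepsilon/2$. With $e$ the continuous admissible function built above, set $w=v-e$. Then $\tilde w=0$ quasi-everywhere on $A\cup B$, so $w\in\mcF_G$ with $G=X\setminus(A\cup B)$. By the characterization of $\mcF_G$ as the $\mcE_1$-closure of $\{\psi\in\mcF\cap C(X):\operatorname{supp}\psi\text{ a compact subset of }G\}$ (\cite{FOT}, Thm. 2.3.3), there is $w_n\in\mcF\cap C(X)$ vanishing on a neighborhood of $A\cup B$ with $w_n\to w$ in $\mcE_1$. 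The functions $f_n:=e+w_n$ then lie in $\mcF\cap C(X)$, satisfy $f_n|_A=0$ and $f_n|_B=1$ (since $w_n$ vanishes near $A\cup B$ and $e$ carries the correct boundary values), and obey $\mcE(f_n)\to\mcE(v)\le R^{-1}(A,B)+\varepsilon/2$, so a large $n$ furnishes the required $f=f_n$.

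I expect the genuine obstacles to be two. The first is the lower energy bound in the finiteness proof: capacity controls $\mcE_1$ rather than $\mcE$, so the $L^2$-term prevents any direct comparison of $\mcE(\tilde f)$ with $Cap(A)$ or $Cap(B)$, and one is forced into the weak-compactness plus quasi-everywhere-limit route, crucially using that $A,B$ have positive capacity. The second is the continuous approximation with the \emph{exact} prescribed values $0$ on $A$ and $1$ on $B$; this is resolved cleanly by the reference-function-plus-$\mcF_G$-density device, which converts the two-sided boundary constraint into a single vanishing condition on $A\cup B$ amenable to \cite{FOT}, Thm. 2.3.3.
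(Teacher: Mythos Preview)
Your proof is correct. The finiteness argument is essentially the same as the paper's: both truncate to $[0,1]$, pass to a Ces\`aro/Mazur limit of a low-energy sequence, and derive a contradiction from irreducibility together with the positive capacity of $A$ and $B$ (the paper cites Theorem A.4.1 of \cite{CF} and Theorem 4.7.1 of \cite{FOT} in place of your weak-compactness/Mazur formulation and your q.e.\ convergence via \cite{FOT}, Thm.~2.1.4). You also make explicit the lower bound $R(A,B)>0$ by constructing a continuous admissible $e$, which the paper leaves implicit.

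Where you genuinely diverge is the approximation step. You fix a near-optimal $v$, subtract the continuous reference $e$, land in $\mcF_G$ with $G=X\setminus(A\cup B)$, and invoke the density of $\mcF\cap C_c(G)$ in $\mcF_G$ to approximate $w=v-e$ by continuous functions vanishing near $A\cup B$; adding $e$ back gives continuous admissible $f_n$ with $\mcE(f_n)\to\mcE(v)$. The paper goes the other way: it takes a minimizing sequence $f_n\in\mcF\cap C(X)$ for the \emph{continuous} infimum, passes to a Ces\`aro limit $f\in\mcF$, observes that $f$ satisfies $\mcE(f+v)\ge\mcE(f)$ for all $v\in\mcF\cap C(X)$ vanishing on $A\cup B$, and then invokes Lemma~2.3.4 of \cite{FOT} to identify $f$ with the global minimizer, so that the two infima coincide. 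Your route is a direct density argument and avoids any appeal to the variational characterization of the equilibrium potential; the paper's route has the side benefit of actually producing the minimizer and showing it is harmonic off $A\cup B$. Both are clean and roughly equal in length.
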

\begin{proof}
	First, we show that $R(A,B)<\infty$. Assume that $R(A,B)=\infty$, then there is a sequence $\tilde{f}_n\in \tilde{\mathcal F}$ such that $\tilde{f}_n|_A=0$, $\tilde{f}_n|_B=1$ and $\lim\limits_{n\to\infty}\mcE(\tilde{f}_n)=0$. In addition, we can assume that $0\leq\tilde{f}_n\leq 1$ by the Markov property of $(\mcE,\mcF)$. By taking the limit of the Cesàro mean of a subsequence of $\tilde{f}_n$ (see Theorem A.4.1 of \cite{CF}), one can find some $\tilde{f}\in \tilde{\mcF}$ such that $\mcE(\tilde{f})=0$,  $\tilde{f}|_A=0$ and $\tilde{f}|_B=1$. This contradicts the assumption that $(\mcE,\mcF)$ is irreducible conservative (see Theorem 4.7.1 (iii)  of \cite{FOT}, noticing that the form is recurrent automatically). 
	
	Next we show that $R^{-1}(A,B)=\inf\{\mcE(f):f\in \mcF\cap C(X),f|_A=0,f|_{B}=1\}$. Choose a sequence $f_n\in \mcF\cap C(X)$ such that $f_n|_A=0$, $f_n|_B=1$ and $\lim\limits_{n\to\infty}\mcE(f_n)=\inf\{\mcE(f):f\in \mcF\cap C(X),f|_A=0,f|_{B}=1\}$.  By choosing Ces\`{a}ro mean of $f_n$, we have a limit $f\in \mcF$ in the $\mcE_1$-norm sense. It is direct to check that $\mcE(f+v)\geq \mcE(f)$ for any $v\in \mcF\cap C(X)$ satisfying $v|_{A\cup B}=0$. By using Lemma 2.3.4 in \cite{FOT}, we can see that $\mcE(f)=R^{-1}(A,B)=\inf\{\mcE(\tilde{f}):\tilde{f}\in \tilde{\mcF},\tilde{f}|_A=0,\tilde{f}|_{B}=1\}$. This implies the second statement. 
\end{proof}

\begin{lemma}\label{lemma34}
	Let $(\mcE,\mcF)$ be an SsDF on $K$. Let $L_i=\overline{q_i,q_{i+1}}$ for $i=1,2,3,4$ with cyclic notation $q_5=q_1$. Then, $Cap(L_i)>0$ for all $i=1,2,3,4$.
\end{lemma}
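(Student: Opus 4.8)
The plan is to argue by contradiction. Suppose $Cap(L_1)=0$. Since $(\mcE,\mcF)$ is $\msG$-symmetric and each $L_j$ is a $\msG$-image of $L_1$, we get $Cap(L_j)=0$ for $j=1,2,3,4$, hence, by countable subadditivity of capacity, $Cap(\Sigma)=0$, where $\Sigma:=\partial\square\cap K=\bigcup_{j=1}^4L_j$. In particular $\mu(\Sigma)=0$, and any segment contained in $\Sigma$ is $\mu$-null. The aim is to manufacture a non-constant function in $\mcF$ of zero $\mcE$-energy, which is impossible for an irreducible conservative form.

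The first step is to upgrade this to $Cap\big(\bigcup_{i=1}^N F_i(\Sigma)\big)=0$. Pick a witnessing sequence $\phi_n\in\mcF\cap C(K)$ with $0\le\phi_n\le1$, $\phi_n\equiv1$ on an open set $V_n\supset\Sigma$, and $\mcE_1(\phi_n)\to0$, and set $\psi_n(x):=\max\{\phi_n(F_i^{-1}x):1\le i\le N,\ x\in F_iK\}$. By the non-overlapping condition, for $i\neq j$ the set $F_i^{-1}(F_iK\cap F_jK)$ is a segment (or point, or empty) lying on $\partial\square$, hence contained in $\Sigma\subset V_n$ where $\phi_n\equiv1$; a direct computation then gives $\psi_n\circ F_i=\phi_n$ for every $i$. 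By (\ref{eqn25}) this forces $\psi_n\in\mcF\cap C(K)$, and then (\ref{eqn24}) gives $\mcE(\psi_n)=\big(\sum_i\rho_i^{-\theta}\big)\mcE(\phi_n)\to0$; one also checks $\psi_n\to0$ in $L^2$. Since $\phi_n\equiv1$ on a neighborhood of $\Sigma$, $\psi_n\equiv1$ on a neighborhood of $\bigcup_iF_i(\Sigma)$, so $Cap\big(\bigcup_iF_i(\Sigma)\big)\le\lim_n\mcE_1(\psi_n)=0$.

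Now fix $i$. The relative boundary $\partial_K(F_iK)$ is contained in $\bigcup_{j\neq i}(F_iK\cap F_jK)$, hence in $F_i(\Sigma)$, so $Cap(\partial_K(F_iK))=0$ by the previous step. I would next argue $\mathbf 1_{F_iK}\in\mcF$: the set $F_iK\setminus\bigcup_{j\neq i}F_jK$ is relatively open in $K$, and the seam $\partial_K(F_iK)$ across which $\mathbf 1_{F_iK}$ jumps is polar, so $\mathbf 1_{F_iK}$ can be approximated in $\mcE_1$-norm by functions in $\mcF$ that coincide with $\mathbf 1_{F_iK}$ off a shrinking family of neighborhoods of $\partial_K(F_iK)$ whose capacities tend to $0$, interpolating freely inside. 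Granting $\mathbf 1_{F_iK}\in\mcF$, apply (\ref{eqn24}): $\mcE(\mathbf 1_{F_iK})=\sum_{j=1}^N\rho_j^{-\theta}\mcE(\mathbf 1_{F_iK}\circ F_j)$, where the $j=i$ term equals $\mcE(\mathbf 1_K)=0$ and, for $j\neq i$, $\mathbf 1_{F_iK}\circ F_j=\mathbf 1_{F_j^{-1}(F_iK\cap F_jK)}$ vanishes $\mu$-a.e.\ and so contributes $0$; hence $\mcE(\mathbf 1_{F_iK})=0$.

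Finally, $\mathbf 1_{F_iK}\in\mcF$, $\mcE(\mathbf 1_{F_iK})=0$, and $0<\mu(F_iK)=\rho_i^{d_H}\mu(K)<\mu(K)$ together contradict irreducibility: for an irreducible conservative (hence recurrent) Dirichlet form, $\mcE(f)=0$ implies $f$ is $\mu$-a.e.\ constant, which is exactly the ingredient used in the proof of Lemma~\ref{lemma33}. Thus $Cap(L_1)>0$, and by symmetry $Cap(L_j)>0$ for $j=1,2,3,4$. The one genuinely delicate point is the membership $\mathbf 1_{F_iK}\in\mcF$: it expresses the fact that once $\Sigma$ is polar the form decouples across cell seams, and making the approximation rigorous is where the regularity, locality and self-similarity of $(\mcE,\mcF)$ have to be used carefully; everything else is soft.
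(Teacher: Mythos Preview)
Your argument is essentially correct and takes a genuinely different route from the paper. The paper's proof is probabilistic at the key step: it invokes the associated Hunt process and shows that, by irreducibility and locality, the process started in $F_1K$ must hit $F_1(\partial_0K)$ with positive probability, so $Cap\big(F_1(\partial_0K)\big)>0$; it then uses the self-similar scaling $g\circ F_i:=f$ to deduce $Cap(\partial_0K)\gtrsim Cap(\partial_1K)>0$. Your Step~2 is exactly the contrapositive of that scaling step, but you replace the Hunt-process input with the analytic observation that a polar seam would allow $\mathbf 1_{F_iK}\in\mcF$ with $\mcE(\mathbf 1_{F_iK})=0$, contradicting irreducibility via the characterization ``$\mcE(f)=0\Rightarrow f$ constant''. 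Your approach has the advantage of avoiding the process entirely.

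The only soft spot is exactly the one you flag: the passage ``interpolating freely inside'' in Step~3 is not a proof, and the general principle you are gesturing at (polar sets are removable for bounded functions locally in $\mcF$) really does need locality and some work. But you can bypass this entirely using the same trick from your Step~2. Given the $\phi_n\in\mcF\cap C(K)$ with $\phi_n\equiv 1$ on a neighborhood of $\Sigma$ and $\mcE_1(\phi_n)\to0$, define $g_n$ on $K$ by $g_n\circ F_i=1-\phi_n$ and $g_n\circ F_j=0$ for $j\neq i$. Since $F_i^{-1}(F_iK\cap F_jK)\subset\Sigma$ and $\phi_n=1$ there, $g_n$ is well-defined and continuous; by \eqref{eqn25} $g_n\in\mcF\cap C(K)$, and by \eqref{eqn24} $\mcE(g_n)=\rho_i^{-\theta}\mcE(\phi_n)\to0$. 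Also $\|g_n-\mathbf 1_{F_iK}\|_{L^2}^2=\rho_i^{d_H}\|\phi_n\|_{L^2}^2\to0$, so $(g_n)$ is $\mcE_1$-Cauchy, $\mathbf 1_{F_iK}\in\mcF$, and $\mcE(\mathbf 1_{F_iK})=0$. This makes Step~3 rigorous without invoking locality, and in fact renders your Step~2 unnecessary as a separate step.
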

\begin{proof}
	For convenience, we write $\partial_0K=\bigcup_{i=1}^{4}L_i$, and write $\partial_1K=\bigcup_{i=1}^{104}F_i(\partial_0 K)$.
	
	Let $(\Omega,\mathcal{M},X_t,\mathbb{P}_x)$ be the Hunt process associated with $(\mcE,\mcF)$ and $L^2(K,\mu)$, and write $\dot{\sigma}_A=\inf\{t\geq 0: X_t\in A\}$ for a subset $A\subset K$. Since $(\mcE,\mcF)$ is irreducible and local, $\mathbb{P}_x(\dot{\sigma}_{K\setminus F_1K}<\infty)=\mathbb{P}_x(\dot{\sigma}_{F_1(\partial_0 K)}<\infty)>0$ for quasi every $x\in F_1K$ by Theorem 4.7.1 and Theorem 4.5.1 in \cite{FOT}. Thus, $Cap\big(F_1(\partial_0K)\big)>0$ by Theorem 4.2.1 in \cite{FOT}.
	
	Now we show that $Cap(\partial_0 K)>0$. With the previous paragraph, it suffices to show that
	\[Cap(\partial_0 K)\geq c\cdot Cap(\partial_1K),\]
	for some $c>0$. Since $\partial_0 K$ is compact, by Lemma 2.2.7 of \cite{FOT}, there is $f\in \mcF\cap C(K)$ such that $f|_{\partial_0K}=1$ and $\mcE_1(f)\leq Cap(\partial_0K)+\varepsilon$ for any small $\varepsilon>0$. Define $g\in C(K)$ by $g\circ F_i(x)=f(x),\forall 1\leq i\leq N$. Then, by (\ref{eqn25}), we see that $g\in \mcF$ and 
	\[Cap(\partial_1 K)\leq \mcE_1(g)=\sum_{i=1}^N\rho_i^{-\theta}\mcE(f)+\|f\|^2_{L^2(K,\mu)}\leq c_1\big(Cap(\partial_0 K)+\varepsilon\big),\]
	for some $c_1>0$ depending on $\theta$. The desired estimate follows immediately noticing that $\varepsilon$ is arbitrary.
	
	By symmetry, for each $i=1,2,3,4$, $Cap(L_i)\geq\frac{Cap(\partial_0 K)}{4}>0$.
\end{proof}

Now, we return to the proof of Theorem \ref{thm31}. 
\begin{proof}[Proof of Theorem \ref{thm31}]
Assume there is an SsDF on $K$, which means that there is a local regular irreducible conservative self-similar Dirichlet form $(\mcE,\mcF)$ on $K$ satisfying (\ref{eqn24}) and (\ref{eqn25}) with some $\theta\in\mathbb R$.\vspace{0.2cm}

\noindent\textit{Claim $1$}. $\theta\leq\frac{1}{2}$. \vspace{0.2cm}

\textit{Proof of Claim $1$.} We will consider $R(L_2,L_4)$, where as in Lemma \ref{lemma34}, $L_i=\overline{q_i,q_{i+1}}$. By Lemma \ref{lemma33} and \ref{lemma34}, we know that $R(L_2,L_4)>0$ and we can find $\tilde{h}\in \tilde{\mcF}$ such that $\tilde{h}|_{L_2}=0,\tilde{h}|_{L_4}=1$ and $\mcE(\tilde{h})=R^{-1}(L_2,L_4)$. 

Let $S=\{38,39,88,89,101,102,103,104\}$ and $A_S=\bigcup_{i\in S}F_iK$. Note that $A_S=K\cap[0,1]\times[\frac 14,\frac 34]$, i.e. the union of the $8$ $1$-cells connecting $L_2$ to $L_4$ horizontally. Define $(\mcE_S,\mcF_{S})$ as 
\[
\begin{aligned}
\mcF_S=\big\{f\in L^2(A_S,\mu|_{A_S}):&f\circ F_i\in \mcF,\forall i\in S,\\ &\text{ and }\widetilde{f\circ F_i} |_{L_{i'}}=\widetilde{f\circ F_j}|_{L_{j'}},\forall i,i',j,j' \text{ such that }F_iL_{i'}=F_jL_{j'}\big\},
\end{aligned}\]
and 
\[\mcE_S(f)=\sum_{i\in S}4^\theta\cdot\mcE(f\circ F_i). \]
Clearly, we will have $\mcF|_{A_S}\subset \mcF_S$. Also, $L_2,L_4$ still have positive capacity with respect to $(\mcE_S,\mcF_S)$ on $L^2(A_S,\mu|_{A_S})$. By using symmetry, we can easily see that 
\[R^{-1}_{\mcE_S}(L_2\cap A_S,L_4\cap A_S)=8\cdot 4^{-2}\cdot 4^{\theta}\mcE(\tilde{h}),\] and the minimizer of the energy can be obtained by gluing scaled functions of $\frac{1}{4}\tilde{h}+\frac{k}{4}$, $k=0,1,2,3$ on $F_iK$, $i\in S$. So we have $\mcE(\tilde{h})\geq \mcE_S(\tilde{h}|_{A_S})\geq \frac{1}{2}\cdot 4^{\theta}\mcE(\tilde{h})$. This implies that $\theta\leq\frac{1}{2}$. \vspace{0.2cm}

\noindent\textit{Claim $2$}. $\theta\geq -\frac{\log 5}{\log a}$. \vspace{0.2cm}

\textit{Proof of Claim $2$.} We will consider $R(F_1K,F_{26}K)$. Since $F_1K$ and $F_{26}K$ have positive measures, both sets have positive capacity. For a small fixed $\varepsilon>0$, by Lemma \ref{lemma33}, we can find $f\in \mcF\cap C(K)$ such that $f|_{F_1K}=0,f|_{F_{26}K}=1$ and $\mcE(f)\leq R^{-1}(F_1K,F_{26}K)+\varepsilon$. Now we use scaled copies of $f$ to construct a function $g\in\mcF$ through the following $4$ steps:\vspace{0.2cm}

1. First, we construct $g$ on $\bigcup_{i=1}^{12}F_iK$ as 
\[g(x)=\begin{cases}
	0,&\text{ if }x\in F_1K,\\
	\frac{j-1}{10},&\text{ if }x\in F_{2j}K,\text{ for }1\leq j\leq 6,\\ 
	\frac{1}{10}f\circ F_{2j+1}(x)+\frac{j-1}{10},&\text{ if }x\in F_{2j+1}K,\text{ for }1\leq j\leq 5.
\end{cases}\]
Note that the requirement, that the contraction ratio of $F_{2j}K$, $1\leq j\leq 5$ is the square of its neighboring cells, ensures the continuity of $g$.

2. Next, we extend $g$ by symmetry to $\bigcup_{i=90}^{100}F_iK$. More precisely, let $g(x)=g\circ \Gamma_{d_1}(x)$ for any $x\in \bigcup_{i=90}^{100}F_iK$.

3. Then, we extend $g$ to the left half of $K$ by $\frac{1}{2}$, i.e. let $g(x)=\frac{1}{2}$ for any $x\in \Big(K\cap([0,\frac 12]\times[0,1])\Big)\setminus \Big((\bigcup_{i=1}^{12}F_iK)\cup(\bigcup_{i=90}^{100}F_iK)\Big)$.

4. Finally, we extend $g$ to $K$ so that $g-\frac{1}{2}$ is antisymmetric with respect to $\Gamma_h$, i.e. $g\circ \Gamma_h(x)+g(x)=1$ for any $x\in K$. \vspace{0.2cm}

Now, we can easily see that $g\in \mcF\cap C(K)$, $g|_{F_1K}=0$, $g|_{F_{26}K}=1$ and 
\[\mcE(g)=20\cdot (\frac{1}{10})^2\cdot a^{-\theta}\mcE(f)=\frac{1}{5}a^{-\theta}\mcE(f).\]
Since $\mcE(g)\geq R^{-1}(F_1K,F_{26}K)$, $\mcE(f)\leq R^{-1}(F_1K,F_{26}K)+\varepsilon$ and $\varepsilon$ is arbitrary, we finally get 
$\frac15a^{-\theta}\geq 1$, and thus $\theta\geq -\frac{\log 5}{\log a}$.\vspace{0.2cm}

Finally, we can see there is a contradiction between Claim $1$ and Claim $2$, noticing that $-\frac{\log 5}{\log a}>\frac{1}{2}$. 
\end{proof}

We conclude the paper with two remarks.
\begin{remark}\label{rem35}
Our construction is partially inspired by the celebrated work of Sabot \cite{Sabot} on p.c.f. self-similar sets \cite{ki1,ki2}. If one try to reproduce the arguments in Subsection 4.2 of \cite{CQ} with reasonable modification, one can see that the effective resistances between corner vertices are comparable with the Poincare constants. This observation motivates the authors to construct the example with corner points loosely connected with inner cells, so that $R_n\ll\lambda_n$.  On the other hand, the authors believe that there exists good Dirichlet forms on many other LSC.
\end{remark}

\begin{remark}
Based on a same idea, the construction can be extended to their higher dimensional analogues (where we can assume the boundary faces are included in the fractal). We still only need three different sizes of cells, and the key to the construction is by choosing small size of $1$-cells near the border of each face, and by placing the cells suitably, we can get an lower bound $\theta\geq 3-d-\varepsilon$, where $d$ is the dimension. 

For the $2$-dimensional  case, one can simply suppose that $(\mcE,\mcF)$ is a resistance form \cite{ki2}, since we want the sub-Gaussian heat kernel estimates. However, we still keep the Dirichlet form setting in the paper so the arguments can be extended to higher dimensions.
\end{remark}

\bibliographystyle{amsplain}

\end{document}